\newtheorem{thm}{Theorem}[section]
\newtheorem{lem}[thm]{Lemma}
\newtheorem{prop}[thm]{Proposition}
\theoremstyle{remark}
\newtheorem{rem}[thm]{Remark}
\newtheorem*{rem*}{Remark}
\theoremstyle{definition}
\newtheorem{ex}[thm]{Example}
\numberwithin{equation}{section}
\newcommand{\om}{\Omega}
\newcommand{\C}{\mathbb{C}}
\newcommand{\Rz}{\mathbb{R}}
\begin{document}
\title{The Bernstein-Walsh-Siciak Theorem for analytic hypersurfaces}

\author{Anna Denkowska \& Maciej P. Denkowski}

\date{February 28th 2014, last revised: December 30th 2018}
\keywords{Complex analytic and algebraic sets, Kuratowski convergence of sets, approximation theory, Bernstein-Walsh-Siciak Theorem, analytic multifunctions.}
\subjclass{32B15, 32E30}

\begin{abstract}
As a first step towards a general set-theoretic counterpart of the remarkable Bernstein-Walsh-Siciak Theorem concerning the rapidity of polynomial approximation of a holomorphic function on polynomially convex compact sets in ${\C}^n$, we prove a version of this theorem for analytic hypersurfaces.
\end{abstract}

\maketitle

\centerline{\it In memoriam Professor J\'ozef Siciak}

\section{Preliminaries}
The classical Bernstein-Walsh-Siciak Theorem, together with its converse, is the following result (due to Bernstein in the case of a real segment in ${\C}$, to Walsh for any compact subset of the complex plane and to Siciak in the general setting, see \cite{S}):
\begin{thm}\label{BWS}
Let $K\subset{\C}^m$ be a nonempty, polynomially convex compact set and $f\colon K\to {\C}$ a continuous function. Then \begin{enumerate}
\item if $f$ is the restriction to $K$ of a holomorphic function defined in a neighbourhood of $K$ in ${\C}^m$, then 
$$
\limsup_{d\to+\infty}\sqrt[d]{\mathrm{dist}_K(f,\mathcal{P}_d({\C}^m))}<1;\leqno{(\#)}
$$
\item if  $(\#)$ holds and in addition the Siciak extremal function $\Phi_K$ is continuous, then $f$ is the restriction to $K$ of a holomorphic function defined in a neighbourhood of $K$ in the ambient space.
\end{enumerate}
\end{thm}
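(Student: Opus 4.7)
My plan is to prove the two implications separately, using quite different techniques for each: a Bernstein-Walsh/telescoping argument for the converse, and polynomial convexity combined with a Cauchy-type representation for the direct implication.

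\medskip

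For Part (2), assume $(\#)$ and pick $\rho$ with $\limsup_d\sqrt[d]{\mathrm{dist}_K(f,\mathcal{P}_d)}<\rho<1$, together with polynomials $P_d\in\mathcal{P}_d$ (and $P_{-1}\equiv 0$) such that $\|f-P_d\|_K\le C\rho^d$ for all large $d$. The differences $Q_d:=P_d-P_{d-1}\in\mathcal{P}_d$ then satisfy $\|Q_d\|_K\le 2C\rho^{d-1}$, so the Bernstein-Walsh inequality $|P(z)|\le\|P\|_K\,\Phi_K(z)^{\deg P}$ gives
\[
|Q_d(z)|\le 2C\rho^{-1}\bigl(\rho\,\Phi_K(z)\bigr)^d,\qquad z\in\mathbb{C}^m.
\]
Since $K$ is polynomially convex one has $\Phi_K\equiv 1$ on $K$, and the continuity hypothesis makes $\Omega:=\{\Phi_K<\rho^{-1/2}\}$ an open neighbourhood of $K$ on which $\rho\,\Phi_K$ is locally bounded above by $\rho^{1/2}<1$. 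Therefore $\sum_{d\ge 0}Q_d$ converges uniformly on compacta of $\Omega$ to a holomorphic function extending $f$.

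\medskip

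For Part (1), assume $f$ is holomorphic on a neighbourhood $U$ of $K$. Polynomial convexity provides a compact polynomial polyhedron $\Pi$ with $K\subset\mathrm{int}\,\Pi\subset\Pi\subset U$, so $f$ extends holomorphically to a neighbourhood of $\Pi$. Applying the Cauchy-Weil integral representation on the distinguished boundary of $\Pi$ and expanding the kernel as a geometric series in the defining polynomials, truncation at degree $d$ yields polynomials $P_d\in\mathcal{P}_d$ whose remainder on $K$ decays geometrically, giving $(\#)$. A more pluripotential-theoretic variant -- closer to Siciak's own approach -- replaces $\Pi$ by a sublevel set $\{\Phi_K^*<R_0\}\subset U$ (where $R_0>1$ exists thanks to compactness of $K$ and the behaviour of $\Phi_K^*$), expands $f$ in a Siciak extremal basis on this sublevel set, and uses Bernstein-Walsh to transfer estimates, producing $\|f-P_d\|_K=O((R/R_0)^d)$ for any $R\in(1,R_0)$.

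\medskip

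The principal difficulty lies in the quantitative step of Part (1): Oka-Weil supplies only qualitative polynomial approximation on polynomially convex sets, and the crux is to extract an explicit geometric rate from the size of the domain of holomorphy of the extension. Part (2), by contrast, falls out cleanly from Bernstein-Walsh once $\Phi_K$ is continuous -- an asymmetry reflected in the fact that the converse direction requires this extra hypothesis while the direct one does not.
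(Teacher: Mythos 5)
The paper does not prove Theorem \ref{BWS} at all: it is stated in the Preliminaries as the classical Bernstein--Walsh--Siciak theorem and cited directly to Siciak's paper \cite{S}, serving as a known input for Theorems \ref{BWS hyper} and \ref{converse}. There is therefore no in-paper proof against which to compare your argument.

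Judged on its own terms, your Part (2) is the standard telescoping argument and is essentially complete: the Bernstein--Walsh inequality $|Q_d(z)|\le\|Q_d\|_K\,\Phi_K(z)^{\deg Q_d}$, combined with $\Phi_K\equiv 1$ on $K=\widehat K$ (which in fact already follows from polynomial convexity alone, since $\{\Phi_K\le 1\}=\widehat K$ and $\Phi_K\ge 1$ everywhere) and with continuity of $\Phi_K$ making $\Omega=\{\Phi_K<\rho^{-1/2}\}$ an open neighbourhood of $K$, yields locally uniform convergence of $\sum Q_d$ on $\Omega$ and hence the desired holomorphic extension. Part (1) is correct in strategy but stated at a sketchier level: exhausting by a compact polynomial polyhedron $\Pi$ with $K\subset\mathrm{int}\,\Pi\subset\Pi\subset U$, applying the Cauchy--Weil representation with a Hefer decomposition, and truncating the geometric expansion of the kernel does produce geometric decay, but the degree bookkeeping should be made explicit --- truncation at order $k$ gives a polynomial of degree on the order of $k\cdot\max_j\deg p_j$ plus the degrees of the Hefer polynomials, so what one obtains directly is $\mathrm{dist}_K(f,\mathcal{P}_d({\C}^m))=O(\theta^{d/D})$ for some fixed integer $D\ge 1$, which still gives $(\#)$ since $\theta^{1/D}<1$. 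Finally, in the alternative route you mention via $\{\Phi_K^*<R_0\}$, take care not to tacitly invoke continuity of $\Phi_K$, which is \emph{not} among the hypotheses of Part (1).
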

Here we denote by $\mathcal{P}_d({\C}^m)$ the space of complex polynomials in $m$ variables, of degrees $\leq d$. For any compact nonempty set $K\subset{\C}^m$ and a continuous function $f\colon K\to {\C}$ we put
$$
\mathrm{dist}_K(f,\mathcal{P}_d({\C}^m)):=\inf\{||f-P||_K\mid P\in \mathcal{P}_d({\C}^m)\},
$$
where $||f||_K:=\sup_{x\in K}|f(x)|$ is the usual {\it Chebyshev norm}. For the convenience of the reader we recall briefly that a nonempty compact set $K\subset{\C}^m$ is said to be {\it polynomially convex} iff it coincides with its {\it polynomial hull} $\widehat{K}:=\{x\in{\C}^m\mid |P(x)|\leq ||P||_K, P\in\mathcal{P}({\C}^m)\}$ where $\mathcal{P}({\C})^m$ is the space of all complex polynomials in $m$ variables, and the {\it Siciak extremal function} of $K$ is defined to be $\Phi_K(x)=\sup\{|P(x)|^{1/\deg P}\mid P\in\mathcal{P}({\C}^m)\setminus{\C}\colon ||P||_K\leq 1\}$.

In other words, the Bernstein-Walsh-Siciak Theorem establishes an equivalence between holomorphicity and the geometric rate of polynomial approximation (recall that the possibility of such an approximation on polynomially convex compact sets is due to the famous Oka-Weil Theorem). It plays an important role in approximation theory and still inspires research, see e.g. \cite{P}, \cite{P2}.

Recall that holomorphic functions are characterized among continuous functions by the analycity of their graphs. Similarly, a continuous function defined on the whole of ${\C}^m$ is a polynomial iff its graph is algebraic, by the Serre Theorem. 

A natural question that arises in complex analytic geometry is whether a Bernstein-Walsh-Siciak-type result is true for the approximation of analytic sets by algebraic ones in the sense of the Kuratowski convergence which is a natural convergence of closed sets generalizing the convergence defined by the Hausdorff distance (in the complex analytic case it is closely related to the convergence of integration currents on analytic sets). It is easy to check that a sequence of continuous functions is convergent locally uniformly iff their graphs converge in the sense of Kuratowski (and the limit function is continuous). A particular motivation comes from the recent beautiful algebraic approximation results of Bilski, e.g. \cite{B1}, \cite{B2}. The first step towards a set-theoretic version of the Bernstein-Walsh-Siciak Theorem is given in the next section, Theorems \ref{BWS hyper} and \ref{converse} deal with the case of hypersurfaces.

Let us briefly recall the notion of the Kuratowski convergence. Let $\om\subset{\C}^m$ be a locally closed, nonempty set. We denote by $\mathcal{F}_\om$ pothe family of all its closed subsets. This space is endowed with a natural, metrizable convergence, called {\it Painlev\'e-Kuratowski or Kuratowski  convergence}, which turns it into a {\sl compact metric} space (see e.g. \cite{TW1}, the compactness is an old result of Zarankiewicz, more details can be found in \cite{RW}). Namely, if $F_\nu,F\in\mathcal{F}_\om$, then we will write $F_\nu\stackrel{K}{\longrightarrow} F$ if and only if\begin{enumerate}
\item Any point $x\in F$ is the limit of some sequence of points $x_\nu\in F_\nu$;
\item For all compact $K\subset \om\setminus F$ there is $K\cap F_\nu=\varnothing$, from some index $\nu_0$ onwards.
\end{enumerate}
If $\om$ is compact, then this convergence is given by the extended {\it Hausdorff metric} defined by 
$$
d_H(E,F):=\begin{cases} \max\{\max_{x\in E}\mathrm{dist}(x,F),\max_{x\in F}\mathrm{dist}(x,E)\},&\hbox{if}\ E,F\neq\varnothing;\\
0,&\hbox{if}\ E=F=\varnothing;\\
\mathrm{diam}\om+1,&\hbox{otherwise.}
\end{cases}
$$
To fix the attention we may assume that all the distances are computed in the usual Euclidean norm. It is easy to see that if $E$ and $F$ are both nonempty, then 
$$
d_H(E,F)=\inf\{r>0\mid E\subset F+\overline{B_m(r)}\quad\hbox{and}\quad F\subset E+\overline{B_m(r)}\},
$$
where $B_m(r)$ is the open  Euclidean ball of radius $r$, centred at the origin. We will denote by $P_m(r)=B_1(r_1)\times\ldots\times B_1(r_m)$ the polydisc centred at $0\in{\C}^m$ and with multiradius $r=(r_1,\dots, r_m)$. For a nonempty subset $\mathcal{W}\subset\mathcal{F}_\Omega$ and $E\in \mathcal{F}_\Omega$, we write $d_H(E,\mathcal{W})=\inf\{d_H(E,W)\mid W\in \mathcal{W}\}$.


For more informations see \cite{TW1}, \cite{TW}.

Finally, recall the H\"older continuity property of roots in the version of \cite{St} (Theorem 2.1.2).
\begin{prop}\label{Hoelder}
For any $a=(a_1,\dots, a_n)\in{\C}^n$ let $P_a(t)=t^n+a_1t^{n-1}+\ldots+a_n$ have $\zeta^a_1,\dots,\zeta^a_n$ as all roots (counted with multiplicities). Let $|\cdot|$ be the maximum norm in ${\C}^n$ and suppose that $C>1$ is such that $|a|\leq C$. Then for any $b\in{\C}^n$ with $|b|\leq C$ one can renumber the roots of $P_b$ in such a way that 
$$
\forall j\in\{1,\dots, n\},\quad |\zeta^a_j-\zeta^b_j|\leq 4nC|a-b|^{\frac{1}{n}}.
$$
\end{prop}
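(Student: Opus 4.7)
The strategy is to evaluate $P_b$ at each root of $P_a$ and compare the resulting value with the factored form of $P_b$. This reduces everything to an elementary coefficient estimate, modulo a combinatorial step at the end which turns ``each root of $P_a$ lies close to some root of $P_b$'' into an actual bijection.

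First I would establish the a priori bound $|\zeta|\le nC=:R$ for every root $\zeta$ of either polynomial: if $|\zeta|>1$ and $P_a(\zeta)=0$ then $|\zeta|^n\le\sum_{i=1}^n|a_i||\zeta|^{n-i}\le n|a|\cdot|\zeta|^{n-1}$, so $|\zeta|\le n|a|\le nC$, while $|\zeta|\le1\le nC$ is trivial. Next, for $|z|\le R$,
$$
|P_a(z)-P_b(z)|\le\sum_{i=1}^n|a_i-b_i|\,R^{n-i}\le nR^{n-1}|a-b|.
$$
Evaluating at $z=\zeta^a_k$, and combining with the factorisation $P_b(t)=\prod_j(t-\zeta^b_j)$, yields
$$
\min_j|\zeta^a_k-\zeta^b_j|^n\le\prod_j|\zeta^a_k-\zeta^b_j|=|P_b(\zeta^a_k)|\le nR^{n-1}|a-b|,
$$
so each root of $P_a$ lies within distance $n^{1/n}R^{(n-1)/n}|a-b|^{1/n}\le nC|a-b|^{1/n}$ of some root of $P_b$ (using $C>1$ to absorb the factor $C^{(n-1)/n}$ into $C$).

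The delicate point is to upgrade this nearest-neighbour bound to a \emph{bijective} renumbering, since several roots of $P_a$ could a priori share the same nearest $\zeta^b_j$. My approach here is a clustering argument via Rouch\'e's theorem. Set $\delta:=nC|a-b|^{1/n}$ and group the roots of $P_a$ into connected components under the relation ``distance $\le\lambda\delta$'' for a suitably chosen constant $\lambda>1$; distinct clusters then lie at distance $>\lambda\delta$ apart, and by enlarging each cluster by a collar of thickness of order $\delta$ one obtains a family of pairwise disjoint open sets whose boundaries remain at distance at least $\delta$ from every root of $P_a$. On any such boundary $|P_a|\ge\delta^n=n^nC^n|a-b|$, which strictly exceeds the coefficient estimate $|P_a-P_b|\le n^nC^{n-1}|a-b|$ since $C>1$; Rouch\'e then ensures that $P_b$ has exactly as many roots in each enlarged cluster as $P_a$ does, whence any matching within each cluster yields a permutation with pairwise distances controlled by a fixed multiple of $\delta$. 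The main obstacle is precisely this final bookkeeping: the nearest-neighbour map carries no injectivity information, and the extra factor in the final constant $4nC$ is exactly the enlargement one must allow in order to run Rouch\'e safely.
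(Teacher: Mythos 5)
The paper does not prove Proposition~\ref{Hoelder}; it is quoted directly from Stasica \cite{St} (Theorem~2.1.2), so there is no internal argument to compare against. Your sketch must therefore stand or fall on its own.

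The preliminary steps are fine: the root bound $|\zeta|\le nC=:R$, the coefficient estimate $|P_a(z)-P_b(z)|\le nR^{n-1}|a-b|$ for $|z|\le R$, and the nearest-neighbour bound $\min_j|\zeta_k^a-\zeta_j^b|\le nC|a-b|^{1/n}=:\delta$ are all correct. The gap is in the Rouch\'e clustering step, and it is not just bookkeeping. If a cluster contains $k$ roots of $P_a$, its diameter can be as large as $(k-1)\lambda\delta$ (a chain of roots spaced just under the merging threshold $\lambda\delta$), and after adding the collar of thickness of order $\delta$ needed to run Rouch\'e, the only thing controlling the distance between a root of $P_a$ and a root of $P_b$ matched ``arbitrarily within the cluster'' is the diameter of the enlarged cluster, which in the worst case $k=n$ is of order $n\delta$. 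So the bound your argument actually delivers is of order $n\cdot\delta=n^2C|a-b|^{1/n}$, not $4nC|a-b|^{1/n}$. The assertion that ``any matching within each cluster yields a permutation with pairwise distances controlled by a fixed multiple of $\delta$'' is false: the multiple grows linearly in the cluster size. Obtaining a constant of the form $c\cdot nC$ with $c$ absolute requires a sharper matching inside each cluster than an arbitrary bijection --- e.g.\ a homotopy/continuity-of-roots argument along the segment from $a$ to $b$, or a recursive multi-scale refinement of the clustering --- and that is exactly the part your sketch leaves open. A secondary, more routine issue: the collar boundary need not lie in the disk $|z|\le R$, so the coefficient estimate has to be applied on a slightly larger disk, which again perturbs the constants you compute.
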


\begin{rem}\label{rwz}
It is useful to observe that the condition 
$$
\limsup_{n\to+\infty} \sqrt[n]{\alpha_n}<1
$$ where $\alpha_n$ is a sequence of non-negative real numbers, is equivalent to the existence of two constants $M>0,\theta\in (0,1)$ such that
$$
\alpha_n\leq M\theta^n,\quad n=0,1,2,\ldots
$$
\end{rem}

\section{The case of hypersurfaces}
In this section we will prove a theorem of Bernstein-Walsh-Siciak type (and its converse) for analytic multifunctions as N.V. Shcherbina \cite{Sh} calls them. The special role played in geometry by the `multigraphs' we are considering here may be inferred from \cite{B2}, \cite{Sh}, \cite{TW3} and \cite{Ch}, \cite{L}.

\subsection{The setting}

Let $\pi(x,t)=x$ for $(x,t)\in{\C}^m\times{\C}$ and let $K\subset{\C}^m$ be a nonempty compact set. Basically, we will be dealing with sets $Y\subset K\times {\C}\subset{\C}^m\times {\C}$ such that $\pi(Y)=K$ and the $x$-sections of $Y$ are compact. The first step towards a general set-theoretic counter-part of the Benrstein-Walsh-Siciak Theorem is to consider sets $Y$ that can be described as the zero-set of a single polynomial in $t$ with coefficients that are continous in $x$. 

Since it is most convenient to treat such sets  $Y$ as finite, continuous (in the sense of the Hausdorff metric cf. Proposition \ref{Hoelder}) multifunctions $K\to \mathscr{P}(\C)$, we refer the reader to \cite{RW} for details (chapters 4 and 5). Instead of directly using the Hausdorff metric, we will adopt a slightly different point of view, much closer in some sense to the classical Bernstein-Walsh-Siciak Theorem \ref{BWS}. 

We denote by $Y(x)\subset{\C}$ the value of the multifunction $Y$ at $x\in K$, i.e. the section of the set $Y$ at the point $x$. Given a nonempty family $\mathcal{H}_K$ of compact-valued, continuous multifunctions $K\to \mathcal{P}({\C})$, for any $W\in \mathcal{H}_K$ we may consider
$$
\delta_K(Y,W):=\sup_{x\in K}d_H(Y(x),W(x))
$$
as a natural counterpart of the Chebyshev norm $||f-P||_K$. Note that if we restrict ourselves e.g. to the space of all multifunctions $K\to\mathcal{P}({\C})$ with compact graphs, $\delta_K$ is indeed a metric. Moreover, it is easy to see that in this situation $$d_H(Y,W)\leq \delta_K(Y,W).$$

Next, we put
$$
\mathrm{dist}_\delta(Y,\mathcal{H}_K):=\inf\{\delta_K(Y,W)\mid W\in \mathcal{H}_K\}.
$$

The main question is now what should we exactly replace $\mathcal{P}_d({\C}^m)$ with.

\subsection{A generalization of Theorem \ref{BWS} (1)}

Consider a complex analytic set $X\subset D\times {\C}$ of pure codimension 1 (i.e. a hypersurface) with proper projection $\pi(x,t)=x$ onto the domain $D\subset{\C}^m$. Note that every analytic hypersurface admits locally a coordinate system in which it satisfies these assumptions. For a nonempty set $K\subset D$ we put $X_K:=X\cap (K\times{\C})$. 

For a given algebraic hypersurface $V\subset {\C}^{m+1}$ we can find a reduced polynomial $P$ (unique up to a unit) such that $V=P^{-1}(0)$. Then the projective degree $\deg V$ coincides with $\deg P$. If we assume, moreover, that $\pi|_V$ is proper, then we can choose $P$ of the form  $P(x,t)=t^d+a_1(x)t^{d-1}+\ldots+a_d(x)$ with polynomial coefficients (and the leading coefficient is identically equal to 1) and all the degrees $\deg a_j$ and $d$ do not exceed $\deg V$. Note that any algebraic subset of ${\C}^{m+1}_{x,t}$ described by a polynomial that is monic in $t$ has proper projection onto the first $m$ coordinates. We refer the reader to \cite{L} and \cite{Ch} for details.

Consider now two families of sets:
\begin{align*}
\mathcal{V}_d^m=\{V\subset{\C}^{m+1}\mid V\ \textrm{is algebraic of pure dimension}\ m,\ \deg V\leq d\}\cup\{\varnothing\},
\end{align*}
together with 
\begin{align*}
\mathcal{H}_d^m(K):=\{V_K\mid & V\in\mathcal{V}_d^m\setminus\{\varnothing\}\colon V\ \textrm{has proper projection onto}\ {\C}^m\}\cup\{\varnothing\}.
\end{align*}

We also put $\mathcal{V}_d^m(K):=\{V_K\mid V\in \mathcal{V}_d^m\}$.

\begin{rem}
By the main result of \cite{TW}, $\mathcal{V}_d^m$, 
is closed in the topology of the Kuratowski convergence. However, it is not the case for $\mathcal{H}_d^m(K)$. 

To see the latter consider the sequence of graphs $$W_\nu:=\{(x,\nu(x^2-(1/4)))\mid |x|\leq 1\}\in\mathcal{H}_2^2(\overline{B_1(1)});$$ it converges to $W=\{-1/2,1/2\}\times{\C}\notin\mathcal{H}_2^2(\overline{B_1(1)})$.
%
\end{rem}

Now we are ready to prove our first main result:
\begin{thm}\label{BWS hyper}
In the setting introduced above assume that $K\Subset D$ is a compact, poynomially convex set. Then 
$$
\limsup_{d\to+\infty}\sqrt[d]{\mathrm{dist}_\delta(X_K,\mathcal{V}_d^m(K))}<1.\leqno{(\#\#)}
$$
In particular, if $X=F^{-1}(0)$ where $F(x,t)=t^n+a_1(x)t^{n-1}+\ldots+a_n(x)\in\mathcal{O}(D)[t]$, then there are constants $M>0$, $\theta\in(0,1)$ and polynomials $p_d(x,t)=t^n+a_{1,d}(x)t^{n-1}+\ldots+a_{n,d}(x)$ in $m+1$ variables such that $\deg p_d^{-1}(0)\leq d$, the polynomial coefficients $a_{j,d}$ converge uniformly to $a_j$ on $K$ and 
$$
\delta_K(X_K, p_d^{-1}(0)_K)\leq M\theta^d, \quad d=0,1,2,\dots
$$
\end{thm}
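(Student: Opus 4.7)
The plan is to prove the more explicit second assertion; the bound $(\#\#)$ then follows immediately from it via Remark \ref{rwz}. Three ingredients are needed, invoked in order: a global representation of $X$ as the zero set of a pseudopolynomial monic in $t$; coefficientwise polynomial approximation via the classical Theorem \ref{BWS}(1); and pointwise transfer of coefficient estimates to estimates on roots via Proposition \ref{Hoelder}.

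First, since $\pi|_X$ is a proper finite branched covering of generic degree $n$ onto $D$ (working component by component) and $X$ is pure of codimension one, the elementary symmetric functions of the fibres of $\pi|_X$ are continuous on $D$ and holomorphic off the branch locus, hence holomorphic throughout $D$ by a Riemann-type extension theorem. This gives the required $a_1,\ldots,a_n \in \mathcal{O}(D)$ with $X = F^{-1}(0)$; see \cite{Ch} or \cite{L}.

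Next, since $K$ is polynomially convex and each $a_j$ is holomorphic in a neighbourhood of $K$, Remark \ref{rwz} applied to the output of Theorem \ref{BWS}(1) produces $M_0 > 0$, $\theta_0 \in (0,1)$ and polynomials $b_{j,k}$ of degree $\leq k$ with $\|a_j - b_{j,k}\|_K \leq M_0\theta_0^k$ for every $j$ and every $k$. For a target total degree $d \geq n$, set $a_{j,d} := b_{j,\,d-n+j}$ and $p_d(x,t) := t^n + \sum_{j=1}^n a_{j,d}(x) t^{n-j}$. A direct check shows $\deg p_d \leq d$, so, being monic in $t$ and non-constant, $p_d^{-1}(0) \in \mathcal{V}_d^m$, and the $a_{j,d}$ converge to the $a_j$ uniformly on $K$ at rate $M_0 \theta_0^{d-n}$.

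Finally, fix $C > 1$ that majorises, uniformly in $x \in K$ and in $d$ large, all coefficient vectors $(a_j(x))_j$ and $(a_{j,d}(x))_j$ in max norm (possible by the uniform convergence just established). Proposition \ref{Hoelder}, applied pointwise in $x$ to the monic polynomials $F(x,\cdot)$ and $p_d(x,\cdot)$, yields
\[
d_H\bigl(X_K(x),\, p_d^{-1}(0)_K(x)\bigr) \;\leq\; 4nC\bigl(\max_j \|a_j - a_{j,d}\|_K\bigr)^{1/n} \;\leq\; 4nC\bigl(M_0 \theta_0^{d-n}\bigr)^{1/n}.
\]
Taking the supremum over $x \in K$ and setting $\theta := \theta_0^{1/n} \in (0,1)$ gives $\delta_K(X_K, p_d^{-1}(0)_K) \leq M\theta^d$ for all large $d$, as required. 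The main obstacle is the $n$-th root loss introduced by Proposition \ref{Hoelder}: one relies essentially on the approximation rate being \emph{strictly} geometric, since only geometric rates remain geometric after extraction of the $n$-th root. Coordinating this with the degree budget (choosing $a_{j,d}$ of degree exactly $d-n+j$ rather than $d$) is the other piece of bookkeeping, but presents no real difficulty.
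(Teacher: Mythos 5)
Your proof is correct and takes essentially the same route as the paper: a monic pseudopolynomial representation of $X$ (Andreotti--Stoll), coefficientwise application of the classical Bernstein--Walsh--Siciak theorem, then pointwise transfer to the roots via the H\"older continuity of Proposition \ref{Hoelder}, accepting the $n$-th root loss in the geometric rate. Your degree bookkeeping (taking $a_{j,d}$ of degree $\leq d-n+j$ so that $\deg p_d^{-1}(0)\leq d$ directly) is in fact slightly cleaner than the paper's, which first obtains $\deg P_d^{-1}(0)\leq 2d-1$ and then has to re-index via $\tilde\theta=\theta^{1/(2n)}$ at the end.
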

\begin{proof}
Note that $X_K\neq\varnothing$.

First, we use the classical Andreotti-Stoll results to describe $X$ as the set of zeroes of a reduced pseudopolynomial (an optimal polynomial, i.e. with discriminant non identically equal to zero in $D$) $F(x,t)=t^n+a_1(x)t^{n-1}+\ldots+a_n(x)\in\mathcal{O}(D)[t]$. 

Next, we can apply the classical Bernstein-Walsh-Siciak Theorem in order to find polynomials $a_{j,d}\in\mathcal{P}_d({\C}^m)$ such that for all $d\in\mathbb{N}$,
$$
||a_j-a_{j,d}||_K\leq M\theta^d,\quad j=1,\dots,n,
$$
where the constants $M>0$ and $\theta\in(0,1)$ are chosen independent of $j$. Now, put
$$
P_d(x,t):=t^n+a_{1,d}(x)t^{n-1}+\ldots+a_{n,d}(x)
$$
and observe that $\deg P_{d}^{-1}(0)\leq 2d-1$ for $d\geq n$. Indeed, as observed earlier, $\deg P_d^{-1}(0)\leq \deg P_d\leq \max\{n, \deg a_{1,d}+n-1,\ldots, \deg a_{n,d}\}$. Then for $V^{(d)}:=P_d^{-1}(0)$, we have certainly $V_K^{(d)}\in \mathcal{H}_{2d-1}^m(K)\subset\mathcal{H}_{2d}^{m}(K)$. 

Fix $x\in K$ and write $t_1(x),\dots,t_n(x)$ for the roots of $F(x,\cdot)$ counted with multiplicities. The H\"older continuity of roots (Proposition \ref{Hoelder}) ensures us that after a suitable renumbering of the roots $t_{j,n}(x)$ of $P_d(x,\cdot)$ we have
$$
|t_j(x)-t_{j,d}(x)|\leq 4nC\max_{i=1}^n|a_i(x)-a_{i,d}(x)|^{1/n}\leq 4nCM^{1/n}\theta^{d/n}
$$
where $C$ is an appropriate constant. For instance, $C=\max_{i=1}^n||a_i||_K+M$ is good for all $d$ large enough, since $||a_{i,d}||_K\leq ||a_{i,d}-a_i||_K+||a_i||_K$. It follows now that there is a constant $\tilde{M}>0$ such that
$$
d_H(X(x),V^{(d)}(x))\leq \tilde{M}\theta^{d/n},\quad x\in K,
$$
and so for $\tilde{\theta}:=\theta^{1/(2n)}\in (0,1)$, we obtain eventually
$$
\mathrm{dist}_\delta(X_K,\mathcal{V}_{2d}^m(K))\leq \mathrm{dist}_\delta(X_K,\mathcal{H}_{2d}^m(K))\leq \delta_K(X_K,V_K^{(d)})\leq \tilde{M}\tilde{\theta}^{2d}.
$$
On the other hand, we have seen above that $V_K^{(d)}\in\mathcal{H}_{2d-1}^m(K)$, too. Therefore, since $\tilde{\theta}\in(0,1)$, we obtain directly
$$
\mathrm{dist}_\delta(X_K,\mathcal{V}_{2d-1}^m(K))\leq \mathrm{dist}_\delta(X_K,\mathcal{H}_{2d-1}^m(K))\leq \delta_K(X_K,V_K^{(d)})\leq\tilde{M}\tilde{\theta}^{2d}\leq \tilde{M}\tilde{\theta}^{2d-1}.
$$
%
and we are done (cf. Remark \ref{rwz}).
\end{proof}
\begin{rem}\label{uwaga}
In the course of the proof we obtain actually $$\limsup_{d\to+\infty}\sqrt[d]{\mathrm{dist}_\delta(X_K,\mathcal{H}_d^m(K))}<1.$$
\end{rem}
\begin{rem}\label{uwaga 2}
Since in our setting there is $d_H\leq \delta_K$, the Theorem above holds true also for the Hausdorff metric, i.e.
$$
\limsup_{d\to+\infty}\sqrt[d]{d_H(X_K,\mathcal{H}_d^m(K))}<1.
$$
\end{rem}

\subsection{A generalization of Theorem \ref{BWS} (2)}

In order to prove a converse to the last Theorem, let us consider now the following setting. Put
$$
Y:=\{(x,t)\in K\times{\C}\mid a_0(x)t^n+a_1(x)t^{n-1}+\ldots+a_n(x)=0\}
$$
where $\varnothing\neq K\subset{\C}^m$ is compact, $a_j\colon K\to {\C}$ are continuous and $a_0\not\equiv 0$. It is easy to see that assuming  $\pi(x,t)=x$ is proper on $Y$ is equivalent to say that $a_0^{-1}(0)=\varnothing$ (compare \cite{TW3}). Thus, we may as well assume that $a_0\equiv 1$. We will also assume that for some $x_0\in K$, there are exactly $n$ different points in the fibre $\pi^{-1}(x_0)\cap Y$. Let $F(x,t)=t^n+a_1(x)t^{n-1}+\ldots+a_n(x)$ be the defining function of $Y$.

We introduce a new family of sets:
\begin{align*}
\mathcal{H}_d^{m,n}(K):=\{V_K\mid &V\in\mathcal{V}_d^m\setminus\{\varnothing\}\colon V\ \textrm{has proper projection onto}\ {\C}^m\\
&\textrm{with covering number}\ \leq n\}.
\end{align*}
Necessarily, $n\leq d$ (cf. \cite{L}, see also \cite{TW3}).

\begin{rem}
It is obvious from the proof of Theorem \ref{BWS hyper} that we are actually dealing there with sets $V^{(d)}_K\in\mathcal{H}_d^{m,n}(K)$ so that -- in view of Remark \ref{uwaga} -- we can replace $(\#\#)$ with 
$$
\limsup_{d\to+\infty}\sqrt[d]{\mathrm{dist}_\delta(X_K,\mathcal{H}_d^{m,n}(K))}<1.\leqno{(\#\#\#)}
$$
\end{rem}

We can now prove a true converse to Theorem \ref{BWS hyper}.

\begin{thm}\label{converse}
In the setting introduced above, assume moreover that $K$ is polynomially convex and $\Phi_K$ is continuous. Then the condition 
$$
\limsup_{d\to+\infty}\sqrt[d]{\mathrm{dist}_\delta(Y,\mathcal{H}_d^{m,n}(K))}<1,
$$
implies that for some neighbourhood $U\supset K$ there is an analytic set $X\subset U\times{\C}$ of pure codimension $1$, having proper projection onto $U$ and such that $X_K=Y$.

In particular, as $Y=F^{-1}(0)$ with $F(x,t)=t^n+a_1(x)+\ldots+a_n(x)\in\mathcal{C}(K,{\C})[t]$, each coefficient $a_j$ admits a holomorphic extension $\tilde{a}_j\in \mathcal{O}(U)$ and $X=\tilde{F}^{-1}(0)$ for $\tilde{F}(x,t)=t^n+\tilde{a}_1(x)t^{n-1}+\ldots+\tilde{a}_n(x)\in\mathcal{O}(U)[t]$.
\end{thm}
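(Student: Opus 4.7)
The plan is to apply the classical converse Bernstein--Walsh--Siciak Theorem \ref{BWS}(2) to each coefficient $a_j$ of $F$ separately, after extracting from the algebraic approximants $V^{(d)}$ polynomial approximations of the $a_j$'s with geometric rate on $K$.

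First, by the hypothesis and Remark \ref{rwz} I would choose $M>0$, $\theta\in(0,1)$ and sets $V^{(d)}\in\mathcal{H}_d^{m,n}(K)$ satisfying $\delta_K(Y,V^{(d)}_K)\le M\theta^d$ for all large $d$. The distinguished point $x_0$, where $Y(x_0)$ consists of $n$ distinct points separated by some $\rho>0$, forces the covering number of $V^{(d)}$ to equal $n$ once $M\theta^d<\rho/3$: at $x_0$ the $n$ points of $Y(x_0)$ cannot share nearest neighbours in $V^{(d)}(x_0)$, so the latter contains at least $n$ distinct elements, matching the a priori upper bound. Hence $V^{(d)}=P_d^{-1}(0)$ for a reduced polynomial $P_d(x,t)=t^n+a_{1,d}(x)t^{n-1}+\cdots+a_{n,d}(x)$, monic in $t$ of $t$-degree $n$, with polynomial $x$-coefficients $a_{j,d}$ of degree $\le d$.

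The crux is then the geometric estimate $\|a_j-a_{j,d}\|_K\le C(\theta')^d$ for some $\theta'\in(0,1)$. All the roots of $F(x,\cdot)$ and $P_d(x,\cdot)$ lie in a fixed bounded disk for every $x\in K$, by the $\delta_K$-bound combined with the boundedness of $Y$. For $d$ large, the roots of $P_d(x,\cdot)$ cluster within distance $M\theta^d$ of the distinct roots of $F(x,\cdot)$, and a Rouch\'e / argument-principle computation on small contours surrounding each cluster forces the total $P_d$-multiplicity inside each cluster to equal the $F$-multiplicity at its centre. This yields an $\ell^\infty$-matching of the root multisets whose inter-cluster displacement is $\le M\theta^d$ and whose intra-cluster contribution is controlled by the cluster diameter---itself bounded via Proposition \ref{Hoelder}. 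Standard Lipschitz estimates on elementary symmetric functions then deliver the desired geometric bound, with $\theta'$ in the worst case equal to $\theta^{1/n}$.

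Finally, Theorem \ref{BWS}(2) applied to each $a_j$ (using the polynomial convexity of $K$ and the continuity of $\Phi_K$) produces a holomorphic extension $\tilde a_j\in\mathcal{O}(U_j)$ to a neighbourhood $U_j\supset K$. Setting $U:=\bigcap_j U_j$ and $\tilde F(x,t):=t^n+\tilde a_1(x)t^{n-1}+\cdots+\tilde a_n(x)\in\mathcal{O}(U)[t]$, the analytic set $X:=\tilde F^{-1}(0)\subset U\times\mathbb{C}$ has pure codimension $1$, projects properly onto $U$ (as $\tilde F$ is $t$-monic), and satisfies $X_K=Y$ because $\tilde a_j|_K=a_j$. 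The hardest step is the multiplicity matching: at $x\in K$ where $F(x,\cdot)$ has multiple roots, one must ensure $P_d(x,\cdot)$ eventually inherits the correct multiplicity pattern uniformly in $x$, and the clustering/Rouch\'e device needs a delicate interplay of the $\delta_K$-bound with the H\"older exponent $1/n$ of Proposition \ref{Hoelder}---hence the possibly weaker rate $\theta^{1/n}$.
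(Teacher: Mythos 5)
Your overall strategy (pass from the $\delta_K$-estimate to a geometric estimate on each coefficient $a_j$, then invoke the classical Theorem~\ref{BWS}(2)) is the same as the paper's. Your argument that the covering number $n_d$ equals $n$ is in fact cleaner than the paper's: you exploit the $\delta_K$-bound directly at $x_0$ to see that $V^{(d)}(x_0)$ must have at least $n$ distinct points once $M\theta^d$ is below a third of the separation, whereas the paper carries out a more involved argument with Kuratowski convergence and the Remmert proper mapping theorem. That simplification is legitimate.

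The real problem lies in the step you yourself flag as the hardest. You write that ``a Rouch\'e / argument-principle computation on small contours surrounding each cluster forces the total $P_d$-multiplicity inside each cluster to equal the $F$-multiplicity at its centre.'' But Rouch\'e (or the argument principle) compares $P_d(x,\cdot)$ and $F(x,\cdot)$ as functions and requires a bound on $|P_d(x,\cdot)-F(x,\cdot)|$ along the contour; such a bound is exactly what we are \emph{trying to prove}, via the coefficient estimates, and the hypothesis only controls the zero \emph{sets} through $\delta_K$. Knowing that both polynomials are monic of degree $n$ in $t$ and that the root sets are Hausdorff-close does \emph{not} pin down how the total multiplicity $n$ distributes among the clusters: for $A=\{0,0,1\}$ and $B=\{0,1,1\}$ (as multisets) one has $d_H(\{0,1\},\{0,1\})=0$, yet any multiplicity-respecting matching has a term of size $1$, and the coefficient vectors $(-1,0,0)$ and $(-2,1,0)$ are far apart. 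So the argument as you present it does not close the gap you identified. The paper itself bypasses this point by asserting that $(*)$ already yields the renumbering $|t_i(x)-t_i^{(d)}(x)|\le M\theta^d$ and then applies Lemma~\ref{rachunek}; it does not appeal to Rouch\'e at all. Finally, your appeal to Proposition~\ref{Hoelder} to bound ``the cluster diameter'' goes in the wrong direction: that proposition passes from nearby \emph{coefficients} to nearby \emph{roots}, whereas here you have nearby roots and want nearby coefficients (this is what Lemma~\ref{rachunek} is for). For the same reason the exponent $\theta^{1/n}$ you end up with is spurious; once the root matching is secured the rate one obtains on the coefficients is $\theta^d$ (up to the harmless bookkeeping constants of Lemma~\ref{rachunek}), not $\theta^{d/n}$.
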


Before the proof let us note the following basic lemma.
\begin{lem}\label{rachunek}
Let $n\geq 1$, $r>0$ and take $t_1,\dots,t_n,s_1,\dots, s_n\in{\C}$. Fix any $R\geq\max_{i=1}^n|t_i|$ and assume that $\max_{i=1}^n|t_i-s_i|\leq r$. Then for any $k\in\{1,\dots,n\}$ and any set of indices $1\leq i_1<\dots<i_k\leq n$, there is
$$
|t_1\cdot\ldots\cdot t_k-s_1\cdot\ldots\cdot s_k|\leq C_k r
$$
where $C_1=1$ and $C_k=R^{k-1}+(r+R)C_{k-1}$ with $C_{k-1}$ denoting the constant for $k-1$.
\end{lem}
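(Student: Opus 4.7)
The plan is a straightforward induction on $k$, reading the stated product as $t_{i_1}\cdots t_{i_k}-s_{i_1}\cdots s_{i_k}$ (the indices $1,\dots,k$ in the display are evidently a typo for $i_1,\dots,i_k$, and the proof is insensitive to which $k$-subset is chosen anyway). The base case $k=1$ is immediate, since $|t_{i_1}-s_{i_1}|\le r = C_1 r$ by hypothesis.

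For the inductive step, suppose the bound holds with constant $C_{k-1}$ for every product of $k-1$ factors chosen from the index set. The key device is the telescoping identity
$$
t_{i_1}\cdots t_{i_k}-s_{i_1}\cdots s_{i_k} = t_{i_1}\cdots t_{i_{k-1}}\,(t_{i_k}-s_{i_k}) + \bigl(t_{i_1}\cdots t_{i_{k-1}} - s_{i_1}\cdots s_{i_{k-1}}\bigr)\,s_{i_k}.
$$
The first summand is bounded by $R^{k-1}\cdot r$ since $|t_{i_j}|\le R$ for each $j$ and $|t_{i_k}-s_{i_k}|\le r$. The second summand is bounded by $C_{k-1} r \cdot |s_{i_k}|$ using the inductive hypothesis applied to the $(k-1)$-tuple $i_1,\dots,i_{k-1}$, and one has $|s_{i_k}|\le |t_{i_k}|+|t_{i_k}-s_{i_k}|\le R+r$. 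Summing the two contributions gives exactly $\bigl(R^{k-1}+(r+R)C_{k-1}\bigr)r = C_k r$, closing the induction.

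I do not expect any real obstacle; the only subtlety is the specific way the telescoping is performed, namely inserting $t_{i_1}\cdots t_{i_{k-1}}\,s_{i_k}$ (rather than $s_{i_1}\cdots s_{i_{k-1}}\,t_{i_k}$) so that the pure $t$-factors line up with the bound $R$ and the remaining $s$-factor with the bound $R+r$. This is precisely what makes the recursion come out as $C_k = R^{k-1}+(r+R)C_{k-1}$ as stated, rather than the symmetric but slightly different expression one would obtain by the opposite split.
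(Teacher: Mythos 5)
Your proof is correct and is essentially the same argument as in the paper: both proceed by induction on $k$ via a telescoping insertion of a mixed monomial with one $s$-factor and $k-1$ $t$-factors (the paper peels off $i_1$ by inserting $s_{i_1}t_{i_2}\cdots t_{i_k}$, you peel off $i_k$ by inserting $t_{i_1}\cdots t_{i_{k-1}}s_{i_k}$), and both bound the lone $s$-factor by $R+r$ to land on the recursion $C_k=R^{k-1}+(r+R)C_{k-1}$.
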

\begin{proof} We proceed by induction on $k$. Clearly, for $k=1$ we can take $C_1=1$. Fix $k>1$ and suppose we have the required inequality for $k-1$ with the constant $C_{k-1}$. Write
\begin{align*}
|&t_{i_1}\cdot\ldots\cdot t_{i_k}-s_{i_1}\cdot\ldots\cdot s_{i_k}|=|t_{i_1}\cdot\ldots\cdot t_{i_k}-s_{i_1}\cdot\ldots\cdot s_{i_k}\pm s_{i_1}\cdot t_{i_2}\cdot\ldots\cdot t_{i_k}|\leq \\
&\leq |t_{i_1}-s_{i_1}|\cdot|t_{i_2}\cdot\ldots\cdot t_{i_k}|+
(|s_{i_1}-t_{i_1}|+|t_{i_1}|)|t_{i_2}\cdot\ldots\cdot t_{i_k}-s_{i_2}\cdot\ldots\cdot s_{i_k}|\leq\\
&\leq rR^{k-1}+(r+R)C_{k-1}r=C_kr
\end{align*}
with $C_k:=R^{k-1}+(r+R)C_{k-1}$. 
\end{proof}

\begin{proof}[Proof of Theorem \ref{converse}]
There are constants $M>0, \theta\in (0,1)$ such that for each $d\in \mathbb{N}$ we can find $W_d\in \mathcal{H}_d^{m,n}(K)$ satisfying
$$
\delta_K(Y,W_d)\leq M\theta^d. \leqno{(\ast)}
$$
In particular, $W_d\stackrel{K}{\longrightarrow}Y$. 

It follows from the definition of $\mathcal{H}_d^{m,n}(K)$ that for each $d$, there are polynomials $a_{j,d}$, $j=1,\dots,n_d\leq n$ such that $W_d=P_d^{-1}(0)_K$, where $P_d(x,t)=t^{n_d}+a_{1,d}(x)t^{n_d-1}+\ldots+a_{n_d,d}(x)$. Then $\deg W_d\leq d$ implies $\deg a_{j,d}\leq d-n+j$ and $n\leq d$ (so that $\deg a_{j,d}\leq 2d-1$).

Take $x_0\in K$ for which there are $n$ pairwise different points $(x_0,t_i)\in Y$, $i=1,\dots, n$. Separate the points $t_i$ by pairwise disjoint closed discs $D_i:=t_i+\overline{B_1(r)}$. Then it is straightforward from the form of $Y$ that there is a bounded (arbitrarily small), connected neighbourhood $G$ of $x_0$ such that $(\overline{G}\times\bigcup_{i=1}^n \partial D_i)\cap Y=\varnothing$. At the same time $({G}\times\mathrm{int} D_i)\cap Y\neq \varnothing$, for each $i$. Therefore, by the convergence $W_d\stackrel{K}{\longrightarrow} Y$, we easily conclude that $P_d(x_0,\cdot)$ has at least $n$ different roots, whenever $d$ is large enough, i.e. $n_d\geq n$, $d\gg 1$. Indeed, for $d\gg 1$ and each $i$, $(\overline{G}\times\partial D_i)\cap W_d=\varnothing$, while $(G\times\mathrm{int} D_i)\cap W_d\neq\varnothing$ which means that $(G\times D_i)\cap W_d\neq\varnothing$ has proper projection on $G$ and thus by the Remmert Theorem $\pi((G\times D_i)\cap W_d)=G$, whence $(\{x_0\}\times D_i)\cap W_d\neq\varnothing$. Summing up, we may assume that $n_d=n$, for all $d$. 

The next step consists in observing that each $a_{k,d}$ converges uniformly to $a_k$ on $K$ and the rate of convergence is geometric. Indeed, $(*)$ implies that for each $x\in K$, renumbering the roots $t_1(x),\dots, t_n(x)$ of $F(x,\cdot)$ and $t^{(d)}_1(x),\dots,t^{(d)}_n(x)$ of $P_d(x,\cdot)$ adequately, we have $|t_i(x)-t_i^{(d)}(x)|\leq M\theta^d$, for any $x\in K$. On the other hand, since the coefficients $a_k(x)$ and $a_{k,d}(x)$ are expressed in terms of the elementary symmetric polynomials of the roots, a careful application of Lemma \ref{rachunek} (\footnote{Note that the roots $t_i(x)$ are uniformly bounded from above and it is enough to estimate the differences $|t_{i_1}(x)\cdot\ldots\cdot t_{i_k}(x)-t^{(d)}_{i_1}(x)\cdot\ldots\cdot t^{(d)}_{i_k}(x)|$ uniformly w.r.t. $x\in K$, for any $1\leq i_1<\ldots<i_k\leq n$, since the $k$-th coefficient $a_k(x)$ is equal to $(-1)^k\sum_{1\leq i_1<\ldots<i_k\leq n}t_{i_1}(x)\cdot\ldots\cdot t_{i_k}(x)$. Lemma \ref{rachunek} for $r=M\theta^d$  and any $R>0$ such that $Y\subset K\times B_1(R)$ yields $||a_k-a_{k,d}||_K\leq D_kM\theta^d$ with $D_k=\binom{n}{k}(R^{k-1}+(M+R)C_{k-1})$ for $k=2,\dots, n$ and $D_1=n$ (since $\theta^d<1$ we can replace $M\theta^d$ in $D_k$ and in all the $C_j$'s, $j<k$, with $M$).}) shows that there is a constant $\tilde{M}>0$ such that
$$
||a_k-a_{k,d}||_K\leq \tilde{M}{\theta}^d,\quad k=1,\dots, n;\> d\in\mathbb{N}. 
$$
It remains to observe that taking $\tilde{\theta}:=\sqrt{\theta}\in (0,1)$ together with the fact that $\tilde{\theta}^{2d}\leq \tilde{\theta}^{2d-1}$ yields eventually
$$
\mathrm{dist}_K(a_j,\mathcal{P}_k({\C}^m))\leq\tilde{M}\tilde{\theta}^k,\quad j=1,\dots,n.
$$
Applying the classical converse to the Bernstein-Walsh-Siciak Theorem, we obtain holomorphic extensions $a_i\subset \tilde{a}_i$ onto a common neighbourhood $U$ of $K$, which implies that $Y=X_K$ for $X=\tilde{F}^{-1}(0)$ where $\tilde{F}(x,t)=t^n+\tilde{a}_1(x) t^{n-1}+\ldots+\tilde{a}_n(x)\in\mathcal{O}(U)[t]$. This is the assertion sought after.
\end{proof}

The situation here is not exactly as before and we do not have the conclusion from Remark \ref{uwaga 2}. As a matter of fact there is no easy transition from   $\delta_K$ to $d_H$ in the last Theorem. This is illustrated by the following Example for which we thank warmly an anonymous reader.
\begin{ex}\label{recenzent}
Consider the sequences $a_0=0$ and $a_k=a_{k-1}+(1/2^k)$ for $k\geq 1$ on the one hand, while on the other, $b_0=0$, $b_k=b_{k-1}+(1/k^2)$, $k\geq 1$. Then $a_k\to 1$, whereas $b_k\to\pi^2/6$. Define $f\colon [0,1]\to[0,\pi^2/6]$ to be the function whose graph is obtained from the segments $[(a_k,b_k),(a_{k+1},b_{k+1})]\subset{\Rz}^2$ with $f(1)=\pi^2/6$ for continuity. 

Next, for $k\geq 1$, let $f_k$ be equal to $f$ everywhere on $[0,1]\setminus[a_k,a_{k+1}]$, while on $[a_k,a_{k+1}]$ we define the graph of $f_k$ to be the segment joining the points $(a_k,b_k)$ and $((a_k+a_{k+1})/2,b_{k+1})$ over $[a_k,(a_k+a_{k+1}/2)]$, and $f_k(x)=b_{k+1}$, whenever $x\in [(a_k+a_{k+1})/2,a_{k+1}]$. 

Then it is apparent that the Hausdorff distance between the graphs of $f$ and $f_k$ is at most $1/2^k$, while $||f-f_k||_{[0,1]}=1/(2k^2)$. Therefore, even though the rate of convergence of the graphs of $f_k$ to the graph of $f$ in the Hausdorff distance is geometric, the rate of uniform convergence of $f_k$ to $f$ is not (\footnote{This is essentially due to the fact that computing the Hausdorff distance involves `looking in various directions', while computing the Chebyshev norm restricts to a `vertical point of view'.}).

As a consequence, if we try to prove Theorem \ref{converse} along the same lines as before, but under the assumption 
$$
\limsup_{d\to+\infty}\sqrt[d]{d_H(Y,\mathcal{H}_d^{m,n}(K))}<1,
$$
there seems to be no simple way of concluding that the rate of convergence of $||a_j-a_{j,d}||_K$ to zero is geometric as we would like it to be. The point is that repeating the argument from the proof of Theorem \ref{converse}, we see that now $d_H(Y, W_d)\leq M\theta^d$ (that replaces $(*)$) implies that
after a suitable renumbering of the roots $t_1(x),\dots, t_n(x)$ of $F(x,\cdot)$ and $t^{(d)}_1(x),\dots,t^{(d)}_n(x)$ of $P_d(x,\cdot)$, we have $|t_i(x)-t_i^{(d)}(x)|\leq C_dM\theta^d$, for some constant $C_d>0$ independent of $x$ (\footnote{This is a general statement about multifunctions: if $F, G\colon K\to{\Rz}^k$ are compact, continuous multifunctions on a compact $K\subset{\Rz}^m$ and $d_H(\mathrm{graph}(F),\mathrm{graph}(G))\leq r$, then for some $C>0$, $d_H(F(x),G(x))\leq Cr$, for all $x\in K$. Otherwise, we would find a sequence $n_1<n_2<\dots$ in $\mathbb{N}$ and points $x_{n_\ell}\in K$ convergent to some $x_0\in K$ and such that $d_H(F(x_{\nu_\ell}),G(x_{\nu_\ell}))\geq n_\ell r$, for all $\ell=1,2,\dots$, which yields a contradiction, since the left-hand side converges to $d_H(F(x_0),G(x_0))$.}), but dependent on $d$. Therefore, we are lead to the inequality $||a_j-a_{j,d}||\leq C_dM\theta^d$ and we do not know whether it is possible to replace $C_dM$ by a constant $\tilde{M}$ independent of $d$.
\end{ex}

\section{Acknowledgements}
This work was begun during the second author's stay at the University Lille 1; 
it was partially supported by  Polish Ministry of Science
and Higher Education grant 1095/MOB/2013/0. 

The authors would like to thank professor W. Ple\'sniak for the introduction into the subject and professor T. Winiarski for suggesting the problem several years ago.

Last but not least, the authors are deeply indebted to a voluntarily anonymous reader for spotting a flaw in the first version of Theorem \ref{converse} and most grateful for suggesting Example \ref{recenzent}.

\bigskip
\noindent{\small   Cracow University of Economics\hfill Jagiellonian University\\
Department of Mathematics\hfill Faculty of Mathematics and Computer Science\\
Rakowicka 27\hfill \L ojasiewicza 6\\
31-510 Krak\'ow, Poland\hfill 30-348 Krak\'ow, Poland\\
e-mail addresses: {\tt anna.denkowska@uek.krakow.pl}\hfill {\tt denkowsk@im.uj.edu.pl}}

\end{document}